\title{A CLASS OF SIMPLE DERIVATIONS OF POLYNOMIAL RING $k[x_1,x_2, \ldots ,x_n]$}
\author{Sumit Chandra Mishra}
\thanks{Corresponding author: \texttt{Sumit Chandra Mishra}}
\author{Dibyendu Mondal}
\author{Pankaj Shukla}
\address{Indian Institute of Technology Indore, Simrol, Khandwa Road, Indore 453 552 India}
\email{sumitcmishra@gmail.com, sumitcmishra@iiti.ac.in}
\address{Indian Institute of Technology Indore, Simrol, Khandwa Road, Indore 453 552 India}
\email{mdibyendu07@gmail.com, mdibyendu@iiti.ac.in}
\address{Indian Institute of Technology Indore, Simrol, Khandwa Road, Indore 453 552 India}
\email{Pankajshuklashvm23@gmail.com, phd2201141002@iiti.ac.in}
\date{\today}
\keywords{simple derivations; isotropy groups; polynomial rings}
\subjclass{13N15; 12H05; 13P05}
\renewcommand{\deg}{\mathsf{deg}}
\renewcommand{\setminus}{\smallsetminus}
\renewcommand{\leq}{\leqslant}
\renewcommand{\geq}{\geqslant}
\newtheorem*{thm*}{Theorem}
\newtheorem{thm}{Theorem}
\numberwithin{thm}{section}
\newtheorem{prop}[thm]{Proposition}
\newtheorem{conj}[thm]{Conjecture}
\newtheorem{lem}[thm]{Lemma}
\theoremstyle{definition}
\newtheorem{nota}[thm]{Notation}
\newtheorem{rem}[thm]{Remark}
\numberwithin{equation}{thm}
\renewenvironment{proof}{\par\noindent {\em Proof:}}{\hfill$\Box$\medskip}
\theoremstyle{plain}
\begin{document}
\begin{abstract}
Let $k$ be a field of characteristic zero. Let $m$ and $\alpha$ be positive integers. For $n\geq 2$, let  $R_n=k[x_1,x_2,\dots,x_n]$ with the $k$-derivation  $d_n$ given by $d_n=(1-x_1 x_2^{\alpha})\partial_{x_1}+x_1^m\partial_{x_2}+x_2\partial_{x_3}+\dots+x_{n-1}\partial_{x_n}$. 
We prove that for integers $m\geq 2$ and $\alpha \geq 1$, 
$d_n$ is a simple $k$-derivation of $R_n$ and $d_n(R_n)$ contains no units. This generalizes a result of D. A. Jordan \cite{j81}. 
We also show that the isotropy group of $d_n$ is conjugate to a subgroup of translations.
\end{abstract}

\maketitle
\section{Introduction}
Let $k$ be a field of characteristic zero and let $R$ be a commutative $k$-algebra. A $k$-linear map $d:R\rightarrow R$ is called to be a \emph{$k$-derivation} if $d(ab)=ad(b)+d(a)b$ for all $a,b\in R$. A $k$-derivation $d$ of $R$ is said to be \emph{simple} if $R$ does not have any proper non-zero ideal $I$ such that $d(I)\subseteq I$. Simple $k$-derivations have several applications such as in proving simplicity of Ore extensions \cite{g89}, in constructing examples of simple Lie rings and non-commutative simple rings \cite{g89}, and they also provide examples of non-holonomic irreducible modules over Weyl algebras \cite{c07}. 

Let $n\geq2$ and let $R_n:=k^{[n]}$ be the polynomial ring in $n$ variables over $k$.
For $n=2$, many examples of simple $k$-derivations of $R_2$ are known; see, for example, \cite{g09, k12,k13,n94,n08,ap23,y19}. Fewer examples of simple $k$-derivations are known for the case of higher number of variables, see  \cite{m01,n94}. Most of these simple $k$-derivations $d$ are such that the $d(R_n)$ contains $1$. It is rare to find examples of simple $k$-derivations such that their images do not contain any unit of $R_n$. Some examples of such derivations are discussed in \cite{j81,k12}. One of the motivations to study such examples is the study of skew polynomial rings $R[x,d]$, which is a non-commutative structure arising from $R$ with a simple $k$-derivation $d$ such that $d(R)$ does not contain any unit of $R$, see  \cite{g89} . 
In this article, we give a class of simple $k$-derivations of $R_n$ such that their images do not contain any unit of $R_n$. 

Let $m\geq 2$ and $\alpha\geq 1$ be integers. 
 Let $R_n=k[x_1,x_2,\dots,x_n]$ with the $k$-derivation  $d_n$ given by
	\begin{align*}
		d_n:=(1-x_1 x_2^{\alpha})\partial_{x_1}+x_1^m\partial_{x_2}+x_2\partial_{x_3}+\dots+x_{n-1}\partial_{x_n}.
	\end{align*}
    
In 1981, D. A. Jordan proved that for $m=3$ and $\alpha=1$, the $k$-derivation $d_n$ on $R_n$ is simple and $d_n(R_n)$ does not contain any unit of $R_n$.   
Generalizing Jordan's result, we prove the following:
\begin{thm}\label{intro-main-thm}{(\Cref{lstthm})}
 Let $m\geq 2$ and $\alpha\geq 1$ be integers. Then the $k$-derivation $d_n$ on $R_n$ is simple and $d_n(R_n)$ does not contain any unit of $R_n$.
\end{thm}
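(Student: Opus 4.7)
The plan is to prove by induction on $n\geq 2$ the following strengthened statement \textbf{(H)}: $d_n$ is a simple $k$-derivation of $R_n$ \emph{and} $(k+k\,x_n)\cap d_n(R_n)=\{0\}$, where $k+k\,x_n$ denotes the two-dimensional $k$-subspace spanned by $1$ and $x_n$. The second clause of (H) subsumes both the no-units conclusion (taking the $x_n$-part zero) and the auxiliary fact $x_n\notin d_n(R_n)$; the latter is precisely what the inductive step for simplicity requires.

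\emph{Base case $n=2$.} Here $d_2=(1-x_1x_2^{\alpha})\partial_{x_1}+x_1^m\partial_{x_2}$. For the image clause of (H), I would suppose $d_2(h)=c+dx_2$ with $(c,d)\neq(0,0)$ and expand $h=\sum_i a_i(x_2)x_1^i$. Reading off the coefficient of each $x_1^i$ in $d_2(h)$, the $x_1^0$-coefficient forces $a_1=c+dx_2$; the relations $(j+1)a_{j+1}=j\,a_j\,x_2^{\alpha}$ for $1\leq j<m$ then give $a_j=(c+dx_2)x_2^{(j-1)\alpha}/j$ for $1\leq j\leq m$. For $j\geq m$ the full recurrence $(j+1)a_{j+1}=j\,a_j\,x_2^{\alpha}-a_{j-m}'$ involves the free parameter $a_0\in k[x_2]$. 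If $h$ is a polynomial then $a_i=0$ for $i>N$ for some $N$; tracing the recurrences backward forces $a_{N-m+1},\ldots,a_N$ to lie in $k$, and comparing this with the explicit formulas for $a_1,\ldots,a_m$---using $\alpha\geq 1$, $m\geq 2$ and $(c,d)\neq(0,0)$---yields a contradiction. For the simplicity of $d_2$, I would use a Darboux-polynomial argument: $d_2$ has no invariant closed points since $1-x_1x_2^{\alpha}=0=x_1^m$ is inconsistent, and if $F\in R_2$ were irreducible with $d_2(F)=gF$, then comparing top $x_1$- and top $x_2$-degrees forces the leading $x_1$-coefficient of $F$ to be a unit and the leading $x_2$-coefficient of $F$ to be of the form $Ax_1^c$, after which a finer bookkeeping of the lower-order terms forces $F\in k$.

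\emph{Inductive step $n\Rightarrow n+1$.} Observe that $d_{n+1}$ restricted to $R_n\subset R_{n+1}$ equals $d_n$, and that simplicity of $d_n$ gives $\ker d_n=k$ (a nonconstant kernel element would generate a proper nonzero $d_n$-stable ideal). For the simplicity of $d_{n+1}$: let $I\subseteq R_{n+1}$ be a nonzero $d_{n+1}$-stable ideal. Then $I\cap R_n$ is $d_n$-stable, so by (H) it either equals $R_n$ (whence $1\in I$) or is zero. In the latter case, let $\ell\geq 1$ be the minimum of $\deg_{x_{n+1}}$ on nonzero elements of $I$. The set $J$ of leading $x_{n+1}$-coefficients of elements of $I$ of degree exactly $\ell$ is a nonzero $d_n$-stable ideal of $R_n$, so $J=R_n$, and there is $f=x_{n+1}^{\ell}+\sum_{i=0}^{\ell-1}g_ix_{n+1}^i\in I$ with $g_i\in R_n$. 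Setting $g_{\ell}=1$, one computes
\[
d_{n+1}(f)=\sum_{i=0}^{\ell-1}\bigl[d_n(g_i)+(i+1)g_{i+1}x_n\bigr]x_{n+1}^i,
\]
which has $x_{n+1}$-degree at most $\ell-1$ and so vanishes by the minimality of $\ell$; extracting the $x_{n+1}^{\ell-1}$-coefficient yields $d_n(g_{\ell-1})=-\ell\,x_n\in(k+k\,x_n)\setminus\{0\}$, contradicting (H) for $d_n$. For the image clause at stage $n+1$: if $d_{n+1}(h)=c+dx_{n+1}$ with $(c,d)\neq(0,0)$, write $h=\sum_j h_j x_{n+1}^j$ and let $N=\deg_{x_{n+1}}h$. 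A short case analysis on $N\in\{0,1\}$, $N=2$, and $N\geq 3$, using $\ker d_n=k$ to conclude $h_N\in k^{*}$ whenever $N\geq 2$, shows in every case that some coefficient equation exhibits a nonzero element of $k+k\,x_n$ as $d_n$ applied to a polynomial, contradicting (H) for $d_n$.

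\emph{Main obstacle.} The inductive step is essentially bookkeeping once the strengthened hypothesis (H) has been set up, so the real technical content lies in the base case $n=2$: the polynomial-recurrence argument for the image question needs careful handling of the free parameter $a_0$, and proving simplicity of $d_2$ via the Darboux-polynomial route requires a delicate comparison of top-degree terms in $d_2(F)=gF$. Everything else then follows from a uniform induction on $n$ using (H).
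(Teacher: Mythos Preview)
Your inductive scheme via the strengthened hypothesis (H) is exactly the paper's architecture: Lemma~3.3 proves $(k+kx_n)\cap d_n(R_n)=\{0\}$ by induction on $n$, and Theorem~3.4 runs the minimal-degree/leading-coefficient-ideal argument for simplicity that you describe; your bookkeeping in the inductive step matches theirs almost line for line (the paper concludes by applying Lemma~3.3 to the monic element $\tilde r$ with $d_n(\tilde r)=0$, whereas you read off the $x_{n+1}^{\ell-1}$-coefficient directly, but these are equivalent).

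The only substantive divergence is at the base $n=2$. For simplicity of $d_2$ the paper does not attempt a Darboux-polynomial argument at all; it simply cites an external result of Shukla--Parkash (and remarks that Kour's work would also do). Your top-degree sketch is plausible, but the paper sidesteps this entirely. For the image claim, the paper carries out in full what you only sketch: working from the top of the $x_1$-expansion down it shows $l=mj_0$ and tracks $\deg_{x_2}f_{l-mj}=j(\alpha+1)$ (Lemma~2.2), working from the bottom up it derives your explicit formulas for $a_1,\ldots,a_m$ (Lemma~2.3), and the contradiction (Proposition~2.4) comes from matching the two resulting expressions for $f_m$ \emph{and} $f_{m-1}$, comparing not just degrees but the \emph{signs} of certain rational leading and subleading coefficients. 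Your phrase ``comparing this with the explicit formulas for $a_1,\ldots,a_m$'' hides precisely this work: when $N>m$ the index ranges $\{N-m+1,\ldots,N\}$ and $\{1,\ldots,m\}$ are disjoint, so there is no immediate clash; one must propagate degree and sign information through all intermediate coefficients to meet in the middle at $a_m$ and $a_{m-1}$, which is exactly the content of Lemma~2.2 and the case analysis in Proposition~2.4. You are right that this is the technical heart of the paper.
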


In \Cref{iso-section}, we study the isotropy group of the $k$-derivation $d_n$ on $R_n$. 
For the ring $R_n$ with a $k$-derivation $d$, the isotropy group is defined as:
$$\text{Aut}(R_n)_{d}:=\{\rho \in \text{Aut}(R_n)| d \rho =\rho d \},$$ 
where $\text{Aut}(R_n)$ is the $k$-automorphism group of $R_n$. 
For $R_n$ and $d_n$ as in \Cref{intro-main-thm},
we show that $\text{Aut}(R_n)_{d_n}$ is conjugate to a subgroup of translations (see \Cref{iso_group}). The proof of the theorem is motivated by a similar result due to D. Yan where $m=3$ and $\alpha=1$ (see \cite[Theorem 2.4]{Y22}). Our result gives a positive result in favor of \cite[Conjecture 1.2]{Y22} which states that for any simple $k$-derivation $d$ on $R_n$, the isotropy group $\text{Aut}(R_n)_{d}$ is conjugate to a subgroup of translations.

\section{Two variables case}

Let $k$ be a field of characteristic zero. 
Let $R_2=k[x,y]$, where $x$ and $y$ are indeterminates over $k$, and $d=y^m \partial_x + (1-x^\alpha y) \partial_y$ be a $k$-derivation of $R_2$, where $m\geq 2$ and $\alpha \geq 1$ are integers. In this section, we show that the image of $d$ does not contain any non-zero element of the form $ax+b$ where $a,b\in k$ (cf., \Cref{general_prop}). For any $h(x)\in k[x]$, we denote the derivative of $h(x)$ by $h'(x)$.

Let us consider $l-m(j-1)$ where $j$ varies over all integers. 
Since $l>0$, for $j=1$ we get that $l-m(j-1)=l>0$. Thus there exists a positive integer such that $l-m(j-1)>0$. Also, as $l$ and $m$ are positive integers, for $j>>0$ it follows that $l-m(j-1)<0$. Thus, there exists greatest positive integer $j_0$ such that $l-m(j_0-1)>0$.

 First, we prove \Cref{l=mj} and \Cref{ML}, which will be used in \Cref{general_prop}. 
 For these lemmas, we adhere to the following notations.

 \begin{nota}\label{second-sec-not} 
 Let $l>0$ and $r=\sum_{i=0}^{l}f_i(x)y^i\in R_2$ be such that $f_i(x)\in k[x]$ for $0\leq i \leq l$, $f_l(x)\neq 0$, and $d(r)=ax+b$ for some $a, b \in k$. Set $f_i(x)=0$ for all integers $i>l$ and $i< 0$. 
\end{nota}

Note that the case $l=0$ will be dealt separately in the proof of \Cref{general_prop}.

\begin{lem}\label{l=mj}
	Let $a, b, r, l$ and $f_i(x)$ be as in \Cref{second-sec-not}. Let $j_0$ be the greatest integer such that $l-m (j_0-1)>0$. Then $l=m j_0$ and for all $j$, $1\leq j\leq j_0$, and all $s$, $1\leq s \leq m-1$,  $\deg_x f_{l-mj}(x)=j(\alpha +1)$ and $\deg_x f_{l-mj+s}(x)\leq (j-1) (\alpha 
    +1)$.
\end{lem}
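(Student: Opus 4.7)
The plan is to extract a linear recursion among the $f_i$'s by expanding the equation $d(r) = ax + b$ coefficient by coefficient in $y$, and then run induction on the $y$-degree index.

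First I would compute
\[
d(r) = \sum_i f_i'(x)\, y^{i+m} + \sum_i i f_i(x)\, y^{i-1} - \sum_i i x^\alpha f_i(x)\, y^i,
\]
so that the coefficient of $y^k$ in $d(r)$ equals $f_{k-m}'(x) + (k+1) f_{k+1}(x) - k x^\alpha f_k(x)$. Forcing this to vanish for every $k \geq 1$ gives the recursion
\[
f_{k-m}'(x) = k x^\alpha f_k(x) - (k+1) f_{k+1}(x), \qquad k \geq 1,
\]
with the convention $f_i = 0$ for $i > l$ or $i < 0$. Reading this at $k = l+m, l+m-1, \ldots, l+1$ (where the right side vanishes) forces $f_l, f_{l-1}, \ldots, f_{l-m+1}$ all to be constants, with $f_l \in k^*$; this already establishes the $j = 1$ inequalities $\deg f_{l-m+s} \leq 0$ for $1 \leq s \leq m-1$. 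Reading the recursion at $k = l$ then yields $f_{l-m}'(x) = l f_l x^\alpha$, a non-zero polynomial of degree $\alpha$, which gives $\deg f_{l-m} = \alpha + 1$ and forces $l - m \geq 0$ on pain of contradiction. So the $j = 1$ case of the claim holds.

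For the inductive step I would assume the claim for $j - 1$, where $2 \leq j \leq j_0$. The equality $\deg f_{l-mj} = j(\alpha+1)$ follows from the recursion at $k = l - m(j-1)$: the inductive hypothesis makes the term $(l - m(j-1)) x^\alpha f_{l-m(j-1)}(x)$ of degree exactly $j(\alpha+1) - 1$ with non-vanishing leading coefficient (since $l - m(j-1) > 0$ by definition of $j_0$), while $(l - m(j-1) + 1) f_{l-m(j-1)+1}(x)$ has strictly smaller degree. Hence $\deg f_{l-mj}'(x) = j(\alpha+1) - 1$, which yields the claim and simultaneously forces $l - mj \geq 0$. The inequalities $\deg f_{l-mj+s} \leq (j-1)(\alpha+1)$ for $1 \leq s \leq m-1$ come from reading the recursion at $k = l - m(j-1) + s$: the inductive hypothesis bounds $\deg x^\alpha f_{l-m(j-1)+s}$ by $(j-1)(\alpha+1) - 1$, and the edge case $s = m-1$ invokes the index $l - m(j-2)$, which is controlled either by the claim at level $j-2$ or, when $j = 2$, by the base case's assertion that $f_l$ is a constant.

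The equality $l = m j_0$ then emerges by pinching: applying the claim at level $j_0$ gives $\deg f_{l - m j_0} = j_0(\alpha+1) > 0$, forcing $l - m j_0 \geq 0$, while the maximality of $j_0$ forces $l - m j_0 \leq 0$ (else $j_0 + 1$ would also satisfy the defining inequality). The main obstacle I anticipate is the careful index book-keeping during the induction, particularly ensuring that the dominant monomial $(l - m(j-1)) x^\alpha f_{l-m(j-1)}$ on the right side of the recursion is genuinely non-zero --- which is exactly the content of the hypothesis $j \leq j_0$ --- and handling the $s = m-1$ edge of the inequality step correctly.
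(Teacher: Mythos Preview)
Your proposal is correct and follows essentially the same approach as the paper: expand $d(r)=ax+b$ in powers of $y$ to obtain the recursion $f'_{k-m}(x)=kx^\alpha f_k(x)-(k+1)f_{k+1}(x)$, read off from the top indices that $f_l,\ldots,f_{l-m+1}$ are constants, and then induct downward in blocks of size $m$ to track the $x$-degrees. The only organizational difference is that you fold the constraint $l-mj\geq 0$ into each inductive step (deducing it from the positivity of the computed degree), whereas the paper first runs the induction to establish all the degree formulas and only afterward, in a separate step, derives $l=mj_0$ by confronting $\deg f_{l-mj_0}=j_0(\alpha+1)>0$ with the maximality of $j_0$; also note that your edge case $s=m-1$ uses the claim at level $j-2$, so your inductive hypothesis should really be stated as strong induction (as the paper does), but you clearly have this in hand.
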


    \begin{proof}
Since $r=\sum_{i=0}^{l}f_i(x)y^i$ where $f_i(x)\in k[x]$, and $d(r)=ax+b$, we have 
	\begin{align*}
		&(y^m\partial_x+(1-x^\alpha y)\partial_y)(\sum_{i=0}^{l}f_i(x)y^i)=ax+b.
	\end{align*}
    Since $f_j(x)=0$ for $j>l$ or $j<0$, we get that
	\begin{equation}\label{fsteqn}
		\sum_{i=0}^{l+m}(f'_{i-m}(x)+(i+1)f_{i+1}(x)-ix^\alpha f_i(x))y^i =ax+b.
	\end{equation}      
	 From \Cref{fsteqn}, for $i\geq1$, we have
	\begin{equation}\label{scndeqn}
		f'_{i-m}(x)=ix^\alpha f_i(x)-(i+1)f_{i+1}(x).
	\end{equation} 
    
\textbf{(I)} First, we find the values of $f_l,f_{l-1},\ldots ,f_{l-m+1}$. 

    Substituting $i=l+m,l+m-1,\dots,l+2,l+1$ in \Cref{scndeqn}, we get that
	\begin{align}\label{cnsttrms}
	&f'_l(x)=f'_{l-1}(x)=\dots=f'_{l-m+1}(x)=0 \nonumber \\
	\Rightarrow& f_l(x),\,f_{l-1}(x),\dots,\,f_{l-m+1}(x)\in k.
	\end{align}
Further, without loss of generality, we may assume that $f_l(x)=1$. For $1 \leq s \leq m-1$, let us denote $f_{l-s}(x)$ by $\lambda_s$.\\

\textbf{(II)} Here we find the value and degree of $f_{l-m}(x)$. Putting $i=l$ in \Cref{scndeqn}, we have
	\begin{align*}
		f'_{l-m}(x)=lx^\alpha f_l(x)-(l+1)f_{l+1}(x),
	\end{align*}  
 \begin{align}\label{four}
 	\Rightarrow f_{l-m}(x)=\frac{lx^{\alpha+1}}{\alpha+1}+c_{l-m}, \end{align} where $c_{l-m} \in k,$ 
    thus $\deg_x f_{l-m}(x)=\alpha+1$.\\

\textbf{(III)} Note that $j_0\geq 1$. We show that the lemma holds if $j_0=1$.

Assume that $j_0=1$. Then $l \leq m$. If $l <m$ then $f_{l-m}(x)=0$, which is a contradiction to the fact that $\deg_x f_{l-m}(x)=\alpha+1$. Thus $l=m$. Hence from \Cref{cnsttrms} and \Cref{four},  $\deg_x f_{s}(x)\leq 0$ for all $1 \leq s \leq m-1$ and $\deg_x f_0(x)=\alpha+1$. Hence the lemma holds.\\  

\textbf{(IV)} Henceforth, we assume that $j_0\geq 2$. Now, we show that for all $j$, $1\leq j\leq j_0$, and all $s$, $1\leq s \leq m-1$,  $\deg_x f_{l-mj}(x)=j(\alpha +1)$ and $\deg_x f_{l-mj+s}(x)\leq (j-1) (\alpha +1)$. We prove this by induction on $j$. 

Note that $j_0\geq 2$ and $l>(j_0-1)m$. From (I) and (II) we have \[f_l=1,f_{l-1}=\lambda_1,\ldots ,f_{l-m+1}=\lambda_{m-1},f_{l-m}=\frac{lx^{\alpha+1}}{\alpha+1}+c_{l-m}\] where $\lambda_1,\ldots,\lambda_{m-1},c_{l-m}\in k$ and $\deg_xf_{l-m}(x)=\alpha+1$. Hence the statement holds for $j=1$. 

Now, let $t$ be a positive integer such that $1\leq t\leq j_0-1$. Assume that for all $j$ with $1\leq j\leq t$, $\deg_x f_{l-mj}(x)=j(\alpha +1)$ and $\deg_x f_{l-mj+s}(x)\leq (j-1) (\alpha +1)$ for all $1\leq s \leq m-1$. 

Then we show that $\deg_x f_{l-m(t+1)}(x)=(t+1)(\alpha +1)$ and $\deg_x f_{l-m(t+1)+s}\leq t (\alpha +1)$ for all $1\leq s \leq m-1$.

Putting $i=l-tm$ in \Cref{scndeqn}, we get that
	\begin{align*} 
        f'_{l-(t+1)m}(x)=(l-tm)x^{\alpha}f_{l-tm}(x)-(l-tm+1)f_{l-tm+1}(x).
	\end{align*}
    Since $\deg_x f_{l-mt}(x)=t(\alpha +1)$ and $\deg_x f_{l-mt+1}(x)\leq (t-1) (\alpha +1)$, then it follows that $\deg_x f_{l-(t+1)m}(x)= (t+1)(\alpha +1)$. 
    
    Similarly, putting $i=l-tm+s$, where $1\leq s\leq m-1$, in \Cref{scndeqn} we get that 
	\begin{align}\label{s eqn}
		f'_{l-(t+1)m+s}(x)=(l-tm+s)x^{\alpha}f_{l-tm+s}(x)-(l-tm+s+1)f_{l-tm+s+1}(x).
	\end{align} 
    
    Case-1. Suppose $1\leq s\leq m-2$. Then $\deg_x f_{l-tm+s},\deg_x f_{l-tm+s+1} \leq (t-1)(\alpha+1)$. Thus, after integrating \Cref{s eqn} with respect to $x$, it follows that $\deg_x f_{l-(t+1)m+s}(x)\leq t(\alpha +1)$. 

    Case-2. Now, let $s= m-1$. Thus $f_{l-tm+s+1}=f_{l-(t-1)m}$. Then $\deg_x f_{l-tm+s} \leq (t-1)(\alpha+1)$ and $\deg_x f_{l-tm+s+1}= \deg_x f_{l-(t-1)m}=(t-1)(\alpha +1)$. Thus, after integrating \Cref{s eqn} with respect to $x$, it follows that $\deg_x f_{l-(t+1)m+s}(x)\leq t(\alpha +1)$. \\

\textbf{(V)} Lastly, we show that $l=mj_0$. 

By the choice of $j_0$, it follows that $l-mj_0\leq 0$. 

Suppose that $l-mj_0<0$. Then $f_{l-mj_0}(x)=0$, and thus $\deg_x f_{l-mj_0}(x)< 0$. But, from (IV) we have $\deg_x{f_{l-mj_0}(x)}=j_0(\alpha +1)>0$. Thus we get a contradiction. Hence $l=mj_0$.  

This completes the proof.
\end{proof}

    \begin{lem}\label{ML}
	Let $a, b, r, l$ and $f_i(x)$ be as in \Cref{second-sec-not}. Now, as in the proof of \Cref{l=mj}, we get that $f_l(x)\in k$. Furthermore, we can assume that $f_l(x)=1$. Then \[f_i(x)=\frac{ax^{(i-1)\alpha+1}+bx^{(i-1)\alpha}}{i}\] for all $i$, $1\leq i\leq m$. 
	\end{lem}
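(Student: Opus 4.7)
The plan is to read off the formula directly from the coefficient equations already recorded in the proof of \Cref{l=mj}. First I would extract the $y^{0}$-coefficient of \Cref{fsteqn}: this gives $f'_{-m}(x)+f_{1}(x)-0\cdot x^{\alpha}f_{0}(x)=ax+b$, and since $f_{-m}(x)=0$ by \Cref{second-sec-not}, the equation collapses to $f_{1}(x)=ax+b$, which is precisely the claimed closed form at $i=1$.

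Next I would specialize \Cref{scndeqn} to the range $1\leq i\leq m-1$. For such $i$ one has $i-m<0$, hence $f_{i-m}(x)=0$ and therefore $f'_{i-m}(x)=0$. Thus \Cref{scndeqn} reduces to the purely algebraic recurrence
\[(i+1)\,f_{i+1}(x)=i\,x^{\alpha}f_{i}(x),\qquad 1\leq i\leq m-1,\]
with no integration constant to carry along.

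From here a one-line induction on $i$ finishes the job: assuming $f_{i}(x)=\bigl(ax^{(i-1)\alpha+1}+bx^{(i-1)\alpha}\bigr)/i$, the recurrence gives $f_{i+1}(x)=\bigl(ax^{i\alpha+1}+bx^{i\alpha}\bigr)/(i+1)$, which is exactly the stated closed form at index $i+1$. This covers every index in the range $1\leq i\leq m$.

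I do not foresee a substantive obstacle in this argument; the main thing to watch is the upper cutoff $i=m$. The clean recurrence is available only while $i-m<0$, i.e.\ for $1\leq i\leq m-1$, because at $i=m$ the derivative term $f'_{i-m}(x)=f'_{0}(x)$ re-enters \Cref{scndeqn} and the recursion ceases to be purely algebraic. That is why the lemma restricts the closed form to $1\leq i\leq m$, and it is also the source of the tension between this formula and the normalization $f_{l}(x)=1$ (with $l=mj_{0}$) that will power the contradiction in \Cref{general_prop}.
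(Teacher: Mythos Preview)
Your argument is correct and follows essentially the same route as the paper: extract $f_{1}(x)=ax+b$ from the $y^{0}$-coefficient of \Cref{fsteqn}, then use \Cref{scndeqn} for $1\leq i\leq m-1$ (where $f_{i-m}=0$) to obtain the algebraic recurrence $(i+1)f_{i+1}(x)=ix^{\alpha}f_{i}(x)$ and induct. The paper's proof additionally records the degrees $\deg_{x}f_{0}(x)$ and $\deg_{x}f_{m}(x)$ from \Cref{l=mj} at the outset, but these are not actually used in establishing the formula, so your write-up is if anything slightly leaner.
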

\begin{proof} By \Cref{l=mj}, we have
	\begin{align*}
	&\deg_x f_{l-mj_0}(x)=\deg_x f_0(x)=j_0(\alpha+1)
    \end{align*}
    and
    \begin{align*}
	& \deg_x f_{l-m(j_0-1)}(x) = \deg_x f_m(x)=(j_0-1)(\alpha+1). 
	\end{align*}
	Now, comparing the degree zero terms of y in \Cref{fsteqn}, we get that 
	\begin{equation*}
		f'_{-m}(x)+f_1(x)=ax+b\\
        \Rightarrow f_1(x)=ax+b.
	\end{equation*}
Further, substituting $i=1$ in \Cref{scndeqn}, we have	
\begin{equation*}
		f'_{1-m}(x)=x^{\alpha}f_1(x)-2f_2(x).
        \end{equation*} 
        Since $m\geq 2$ we have $f_{1-m}=0$, and thus
        \begin{equation*}
         f_2(x)=x^{\alpha}f_1(x)=\frac{ax^{\alpha+1}+bx^\alpha}{2}.
	\end{equation*}
	Similarly for $i=2,\dots,m-1$, we have 
	\begin{equation*}
		f'_{i-m}(x)=i x^{\alpha+1}f_{i}(x)-(i+1)f_{i+1}(x),
        \end{equation*}
	which further implies that 
    \begin{equation*}
f_{i+1}(x)=\frac{ax^{i\alpha+1}+bx^{i\alpha}}{i+1}.
	\end{equation*}
This completes the proof. 
 \end{proof}

\begin{prop}\label{general_prop}
Let $m\geq 2$ and $\alpha\geq1$ be integers. 
Let $R_2=k[x,y]$ with the $k$-derivation $d=y^m\partial_x+(1-x^\alpha y)\partial_y$. Let $r\in R_2$ be such that $d(r)=ax+b$ for some $a$, $b$ $\in k$. 
Then $r\in k$ and $a=b=0$. 
\end{prop}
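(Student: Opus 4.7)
The plan is to split on $l := \deg_y r$ for $r = \sum_{i=0}^{l} f_i(x)y^i$ with $f_l \neq 0$, and rule out $l \geq 1$. If $l = 0$, then $r = f_0(x)$ and $d(r) = y^m f_0'(x)$; since $m \geq 2$ while $d(r) = ax + b$ has $y$-degree zero, we obtain $f_0' = 0$, hence $r \in k$ and $a = b = 0$.

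Now suppose $l \geq 1$, and rescale so that $f_l = 1$. By \Cref{l=mj}, $l = m j_0$, the degrees $\deg_x f_{l-mj} = j(\alpha+1)$ are pinned (with an explicit leading coefficient coming from the top-down recursion in that proof), and $\deg_x f_{l-mj+s} \leq (j-1)(\alpha+1)$ for $1 \leq s \leq m-1$. By \Cref{ML}, $f_i = \frac{a x^{(i-1)\alpha+1} + b x^{(i-1)\alpha}}{i}$ for $1 \leq i \leq m$. The key move is to compare these two descriptions of the common quantity $f_m = f_{l - m(j_0-1)}$. When $j_0 = 1$, $f_m = 1$ and \Cref{ML} yields $m = a x^{(m-1)\alpha+1} + b x^{(m-1)\alpha}$, which is impossible since $(m-1)\alpha \geq 1$ forces $a = b = 0$ and thus $m = 0$.

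When $j_0 \geq 2$, equating the two degrees for $f_m$ restricts $(m, \alpha, j_0)$ to a short list of admissible tuples and fixes $a$ or $b$, and equating the remaining coefficients (the \Cref{ML} formula has at most two nonzero monomials, so every other coefficient in the top-down formula must vanish) pins down the constants $c_{l-m}$ and $\lambda_1, \ldots, \lambda_{m-1}$. To close the argument, I will extract $f_{l-m-1}$ from the $y^{l-1}$-coefficient of $d(r) = ax+b$, obtaining $f'_{l-m-1}(x) = (l-1)\lambda_1 x^\alpha - l$; for $j_0 = 2$ this coincides with $f_{m-1}$, for which \Cref{ML} supplies an explicit formula, and matching the two forces a second value of $a$ or $b$ inconsistent with the first. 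For $j_0 \geq 3$, the same strategy is iterated by running the top-down recursion further down until the index $m-1$ is reached. The main obstacle is precisely this $j_0 \geq 2$ regime: the clean one-line contradiction from $j_0 = 1$ does not immediately generalize, and one must organize the matching between the sparse bottom-up \Cref{ML} formulas and the many-parameter top-down expressions across multiple indices in order to overdetermine the system.
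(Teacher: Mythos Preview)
Your overall strategy---comparing the bottom-up formulas for $f_1,\ldots,f_m$ from \Cref{ML} against the top-down recursion descending from $f_l=1$---is exactly the paper's, and your treatment of $l=0$, $j_0=1$, and $j_0=2$ is correct. (The paper phrases the $j_0=1$ elimination as a degree mismatch between \Cref{E1} and \Cref{E2}, but your direct argument is equivalent.)

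The gap is the $j_0\geq 3$ regime, which you yourself flag as the main obstacle. You propose to ``iterate'' the recursion down to index $m-1$ and hope the resulting many-parameter expression cannot match the two-monomial formula from \Cref{ML}, but you have not identified \emph{why} the match must fail. Each integration step introduces a fresh constant, and the $\lambda_s=f_{l-s}$ for $1\leq s\leq m-1$ are also free, so ``overdetermination'' is not automatic---you need an invariant that survives the iteration regardless of how those constants are chosen. The paper supplies exactly this: it proves by induction on $j$ (the Claim in part (I) of the proof) that the leading coefficient $A_{l-jm}$ of $f_{l-jm}$ lies in $\mathbb{Q}_{>0}$, while a specific secondary coefficient $B_{l-jm-1}$ of $f_{l-jm-1}$ (namely the coefficient of $x^{(j-1)(\alpha+1)+1}$) lies in $\mathbb{Q}_{<0}$, for every $1\leq j\leq j_0-1$. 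At $j=j_0-1$ this forces $a$ (or $b$, when $a=0$) to be a positive rational via $f_m$ and, after a short case split, a negative rational via $f_{m-1}$. Your $j_0=2$ computation is precisely the base case of this sign-tracking---the term $-l$ in $f'_{l-m-1}=(l-1)\lambda_1 x^{\alpha}-l$ is the seed of the negative $B$-coefficient---but what is missing from your plan is the inductive step showing these signs persist through all subsequent integrations independently of the free constants.
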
 

\begin{proof} 
Let $r\in R_2 \setminus k$ be such that $d(r)=ax+b$ for some $a,b\in k.$
As $r\in R_2$, we can write $r$ as $r=\sum_{i=0}^{l}f_i(x)y^i$ where $l\geq0$, $f_i(x)\in k[x]$ and $f_l(x) \neq 0$. 

Suppose that $l=0$. Then $d(r)=d(f_0(x))=y^mf'_0(x)=ax+b.$ Thus we have $f'_0(x)=0$ which implies that $r=f_0(x)\in k$, which is a contradiction.

Henceforth, we assume that $l>0$. Now, we are in the same setup as \Cref{l=mj} and \Cref{ML}, and so we will use facts and results from those lemmas. As in the proof of \Cref{l=mj}, we have $$f_l(x),\,f_{l-1}(x),\,f_{l-2}(x),\dots,\,f_{l-m+1}(x)\in k.$$ Further, without loss of generality, we assume that $f_l(x)=1$. For $0 \leq s \leq m-1$, let us denote $f_{l-s}(x)$ by $\lambda_s$. 

By \Cref{ML}, we have 
\begin{equation}\label{E1}
    f_m(x)=\frac{ax^{(m-1)\alpha+1}+bx^{(m-1)\alpha}}{m}.
\end{equation}

Considering $j_0$ as in \Cref{l=mj}, we have $l=mj_0$ and 
\begin{equation}\label{E2}
    \deg_x f_m(x)=\deg_x f_{l-m(j_0-1)}(x)=(j_0-1)(\alpha +1).
\end{equation}

 From \Cref{E1} and \Cref{E2}, it follows that $j_0>1$. 

By \Cref{ML}, we get that 
\begin{equation}\label{eqn01}
    f_{i}(x)=\frac{ax^{(i-1)\alpha+1}+bx^{(i-1)\alpha}}{i}
\end{equation}
 for $1\leq i\leq m$. From \Cref{scndeqn}, we have \\
	\begin{equation}\label{scndeqn1}
		f'_{i-m}(x)=ix^\alpha f_i(x)-(i+1)f_{i+1}(x)
	\end{equation} for all $i\geq 1$. \\

 \textbf{(I)} First, we find the values of $f_{l-jm}(x)$ and $f_{l-jm-1}(x)$ for all $1\leq j\leq j_0-1$. 
 
 For $1\leq j \leq j_0-1$, substituting $i=l-(j-1)m$ and $i=l-(j-1)m-1$ in \Cref{scndeqn1}, we have 

\begin{equation}\label{eqn1}
     f'_{l-jm}(x)= (l-(j-1)m)x^{\alpha}f_{l-(j-1)m}(x)- (l-(j-1)m+1)f_{l-(j-1)m+1}(x), 
 \end{equation}
 and 
 \begin{equation}\label{eqn2}
     f'_{l-jm-1}(x)= (l-(j-1)m-1)x^{\alpha}f_{l-(j-1)m-1}(x)- (l-(j-1)m)f_{l-(j-1)m}(x),
 \end{equation}
respectively. 

\vspace{2mm}
\textbf{Claim.} We claim that for all $j$, $1\leq j\leq j_0-1,$
\begin{equation}\label{eqn3}
    f_{l-jm}(x)=A_{l-jm}\;x^{j(\alpha+1)}+ O_{l-jm}(x),
\end{equation}
and 
\begin{equation}\label{eqn4}
    f_{l-jm-1}(x)=A_{l-jm-1}\;x^{j(\alpha+1)} 
	+B_{l-jm-1}\;x^{(j-1)(\alpha+1)+1}+O_{l-jm-1}(x),
\end{equation}
where $A_{l-jm}\in \mathbb{Q}_{>0}$, $B_{l-jm-1} \in \mathbb{Q}_{<0}$, $A_{l-jm-1}\in k$, $O_{l-jm}(x)\in k[x]$ with $\deg_x O_{l-jm}(x) < j(\alpha +1)$ and $O_{l-jm-1}(x)\in k[x]$ with $\deg_x O_{l-jm-1}(x) < (j-1)(\alpha+1)+1$.

We prove the above claim by induction on $j$. Note that $f_{l-1}(x)=\lambda_1$, $f_l(x)=1$ and $f_{l+1}(x)=0$. Substituting $j=1$ in \Cref{eqn1} and then integrating with respect to $x$, we get that  
	\begin{equation}\label{eqn5}
		f_{l-m}(x)=\frac{lx^{\alpha+1}}{\alpha+1}+c_{l-m}, 
	\end{equation}

where $c_{l-m}\in k$. Similarly, substituting $j=1$ in \Cref{eqn2} and then integrating with respect to $x$, we have
    \begin{equation}\label{eqn6}
		f_{l-m-1}(x)=\frac{(l-1)x^{\alpha+1}}{\alpha+1}\lambda_1-lx+c_{l-m-1}
	\end{equation} 
    where $c_{l-m-1}\in k$. Note that $A_{l-m}=\frac{l}{\alpha+1} \in \mathbb{Q}_{>0}$,  $B_{l-m-1}(x)=-l\in \mathbb{Q}_{<0}$, $A_{l-m-1}=\frac{(l-1)\lambda_1}{\alpha +1}\in k$, $O_{l-m}(x)=c_{l-m}\in k[x]$ with $\deg_x O_{l-m}(x) < (\alpha+1)$, and $O_{l-m-1}(x)=c_{l-m-1}\in k[x]$ with $\deg_x O_{l-m-1}(x) < 1$. Hence the above claim holds for $j=1$. 

    Next, as induction hypothesis we assume that the above claim holds for $(j-1)$ where $2\leq j\leq j_0-1$. Next, we show that the claim holds for $j$. 
    
    Substituting the values of $f_{l-(j-1)m}(x)$ from \Cref{eqn3} in \Cref{eqn1} we get that
\begin{multline}\label{eqn7}
 f'_{l-jm}(x) = (l-(j-1)m)x^{\alpha}[A_{l-(j-1)m}x^{(j-1)(\alpha+1)} \\
    + O_{l-(j-1)m}(x)]- (l-(j-1)m+1)f_{l-(j-1)m+1}(x).
\end{multline}
    Then integrating with respect to $x$, we can write
\begin{equation}\label{eqn8}
    f_{l-jm}(x)= \frac{(l-(j-1)m) A_{l-(j-1)m}}{j(\alpha +1)} x^{j(\alpha+1)} + O_{l-jm}(x)
\end{equation}
    for some $O_{l-jm}(x)\in k[x]$. By the induction hypothesis, we have $A_{l-(j-1)m} \in \mathbb{Q}_{>0}$ and $\deg_x O_{l-(j-1)m}(x) < (j-1)(\alpha+1)$, and from \Cref{l=mj} we get that\\ $\deg_x f_{l-(j-1)m+1}(x) \leq (j-2)(\alpha +1)$. Hence, it follows that 
    \begin{equation}
    f_{l-jm}(x)=A_{l-jm}\;x^{j(\alpha+1)}+ O_{l-jm}(x),
\end{equation}
where
    \begin{equation}
        A_{l-jm}:=\frac{(l-(j-1)m) A_{l-(j-1)m}}{j(\alpha +1)} \in \mathbb{Q}_{>0}
    \end{equation}
     and $\deg_x  O_{n-jm}(x) < j(\alpha+1)$. 

Similarly, substituting the values of $f_{l-(j-1)m-1}(x)$ and $f_{l-(j-1)m}(x)$, from \Cref{eqn4} and \Cref{eqn3} respectively, in \Cref{eqn2}, we get that 
\begin{multline}\label{eqn9}
    f'_{l-jm-1}(x)= (l-(j-1)m-1)x^{\alpha}[A_{l-(j-1)m-1}x^{(j-1)(\alpha+1)} 
	\\+B_{l-(j-1)m-1}x^{(j-2)(\alpha+1)+1}+O_{l-(j-1)m-1}(x)]
    - (l-(j-1)m)[A_{l-(j-1)m}x^{(j-1)(\alpha+1)} 
    \\+ O_{l-(j-1)m}(x)].
\end{multline}

Now, integrating \Cref{eqn9} with respect to $x$, we can write

\begin{multline}
    f_{l-jm-1}(x)= \frac{(l-(j-1)m-1) A_{l-(j-1)m-1}}{j(\alpha +1)} x^{j(\alpha +1)} \\+  \frac{(l-(j-1)m-1) B_{l-(j-1)m-1}-(l-(j-1)m)A_{l-(j-1)m}}{(j-1)(\alpha+1)+1} x^{(j-1)(\alpha+1)+1} \\+ O_{l-jm-1}(x) 
\end{multline}
 for some $O_{l-jm-1}(x)\in k[x]$. By the induction hypothesis, we have $A_{l-(j-1)m}\in \mathbb{Q}_{>0}$, $A_{l-(j-1)m-1}\in k$,  $B_{l-(j-1)m-1} \in \mathbb{Q}_{<0}$,  $\deg_x O_{l-(j-1)m}(x) < (j-1)(\alpha +1)$ and $\deg_x O_{l-(j-1)m-1}(x) < (j-2)(\alpha+1)+1$. Hence, it follows that 
 \begin{equation}
    f_{l-jm-1}(x)=A_{l-jm-1}\;x^{j(\alpha+1)} 
	+B_{l-jm-1}\;x^{(j-1)(\alpha+1)+1}+O_{l-jm-1}(x),
\end{equation}
where
 \begin{equation}
     A_{l-mj-1}:= \frac{(l-(j-1)m-1) A_{l-(j-1)m-1}}{j(\alpha +1)} \in k, 
 \end{equation}

\begin{equation}
    B_{l-jm-1}:= \frac{(l-(j-1)m-1) B_{l-(j-1)m-1}-(l-(j-1)m)A_{l-(j-1)m}}{(j-1)(\alpha+1)+1} \in \mathbb{Q}_{<0}
\end{equation} 
and $\deg_x \, O_{l-jm-1}(x) < (j-1)(\alpha +1)+1$. Thus the claim follows. \\

\textbf{(II)} Finally, we show that using expressions of $f_m$ and $f_{m-1}$, from (I), we get a contradiction. 

Note that $j_0>1$. Since $l=j_0m$, putting $j=j_0-1$ in \Cref{eqn3}, we have
\begin{equation}\label{E3}
    f_m(x)=A_{l-(j_0-1)m}x^{(j_0-1)(\alpha+1)}+O_{l-(j_0-1)m}(x)
\end{equation}
 with $A_{l-(j_0-1)m}\in \mathbb{Q}_{>0}$ and $\deg_x O_{l-(j_0-1)m}(x) < (j_0-1)(\alpha+1)$. 


Similarly, putting $j=j_0-1$ in \Cref{eqn4}, we get that
\begin{multline}\label{E4}
    f_{m-1}(x)=f_{l-(j_0-1)m-1}(x)=A_{l-(j_0-1)m-1}x^{(j_0-1)(\alpha+1)} 
	\\+B_{l-(j_0-1)m-1}x^{(j_0-2)(\alpha+1)+1}+O_{l-(j_0-1)m-1}(x),
\end{multline}

where $A_{l-(j_0-1)m-1}\in k$, $B_{l-(j_0-1)m-1} \in \mathbb{Q}_{<0}$ and $\deg_x O_{l-(j_0-1)m-1}(x) < (j_0-2)(\alpha+1)+1$.

Recall from \Cref{eqn01}, we have 
\begin{equation}\label{eqn001}
    f_{m}(x)=\frac{ax^{(m-1)\alpha+1}+bx^{(m-1)\alpha}}{m}
\end{equation}
and
\begin{equation}\label{eqn002}
    f_{m-1}(x)=\frac{ax^{(m-2)\alpha+1}+bx^{(m-2)\alpha}}{m-1}
\end{equation}
\vspace{2mm}
 \textbf{Case-1.}  Assume that $a\neq 0$. Then from \Cref{eqn001} and \Cref{E3}, we have  
 \begin{equation}\label{E5}
     (j_0-1)(\alpha +1)=(m-1)\alpha +1\,\, \text{and}\,\, a=m A_{l-(j_0-1)m}\in \mathbb{Q}_{>0}. 
 \end{equation}
 Suppose, if $A_{l-(j_0-1)m-1}\neq 0$, then from \Cref{eqn002} and \Cref{E4}, we get that 
 \begin{equation}\label{E6}
     (j_0-1)(\alpha +1)=(m-2)\alpha +1. 
 \end{equation}
Thus from \Cref{E5} and \Cref{E6}, we get two distinct values of $m$, and therefore we have a contradiction. Hence, $A_{l-(j_0-1)m-1}= 0$. Now, comparing the highest degree coefficient of $x$, from \Cref{eqn002} and \Cref{E4}, it follows that $a= m B_{l-(j_0-1)m-1}\in \mathbb{Q}_{<0}$. But from \Cref{E5} we have $a\in \mathbb{Q}_{>0}$, hence we have a contradiction.  

\vspace{2mm}
\textbf{Case-2.} Assume that $a=0$ and $b\neq 0$.
Then from \Cref{eqn001} and \Cref{E3}, we have 
\begin{equation}\label{EQ1}
    (j_0-1)(\alpha +1)=(m-1)\alpha \,\, \text{and}\,\, b=m A_{l-(j_0-1)m} \in \mathbb{Q}_{>0}.
\end{equation}

Suppose $A_{l-(j_0-1)m-1}= 0$. Then from Equations (\ref{eqn002}) and (\ref{E4}) we have $(j_0-2)(\alpha+1)+1=(m-2)\alpha$. Hence, from  \Cref{eqn002} and \Cref{E4}, it follows that $b=(m-1)B_{l-(j_0-1)m-1}\in \mathbb{Q}_{<0}$. Thus we have a contradiction by \Cref{EQ1}. So $A_{l-(j_0-1)m-1}\neq 0$. Then again from Equations (\ref{eqn002}) and (\ref{E4}) we have $(j_0-1)(\alpha+1)=(m-2)\alpha$. But from \Cref{EQ1}, $(j_0-1)(\alpha +1)=(m-1)\alpha$, which is a contradiction.

\vspace{2mm}
\textbf{Case-3.} Assume that $a=b=0$. Then from \Cref{eqn001} and \Cref{E3}, we get that $f_m(x)=0$ and $\deg_x f_m(x)=(j_0-1)(\alpha +1)$, respectively. Since $j_0>1$, it follows that $(j_0-1)(\alpha +1)>0$, which is a contradiction.

Thus we get a contradiction to the fact that $l>0$. Hence $r\in k$, which further implies that $a=b=0$. 
\end{proof}

\begin{rem}
For $m=1$ and any integer $\alpha \geq 1$, considering $r= \frac{x^{\alpha +1}}{\alpha +1} +y\in R_2$, we have $d(r)=1$. Hence \Cref{general_prop} does not hold for $m=1$. 
\end{rem}

\section{The general case} 
Let $m\geq 2$ and $\alpha\geq 1$ be integers. 
Let $R_n=k[x_1,x_2,\dots,x_n]$ with the $k$-derivation  $d_n$ given by
	\begin{align}
		d_n=(1-x_1 x_2^{\alpha})\partial_{x_1}+x_1^m\partial_{x_2}+x_2\partial_{x_3}+\dots+x_{n-1}\partial_{x_n}
	\end{align}
    
    For $n=2$, considering $x_1=y$ and $x_2=x$, we get the ring $R_2=k[x,y]$ with the $k$-derivation $(1-x^\alpha y )\partial_{y}+y^m\partial_{x}$, which has been considered in the previous section. 

\begin{lem}\label{COR}
		Let $m\geq 2$ and $\alpha \geq 1$ be integers. Let $d_2$ denote the $k$-derivation $y^m\partial_x+(1-x^\alpha y)\partial_y$ of $k[x,y]$. Then $d_2$ is a simple $k$-derivation of $k[x,y]$.
\end{lem}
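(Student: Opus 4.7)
My plan is to prove \Cref{COR} by contradiction. Assume there is a nonzero proper $d_2$-stable ideal $I$ of $k[x,y]$. Since $\mathrm{char}\,k = 0$, any minimal associated prime of $I$ is again a $d_2$-stable ideal, so I may replace $I$ by a $d_2$-stable prime $\mathfrak{p}$.

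I first rule out $\mathfrak{p}$ being maximal. If it were, $k[x,y]/\mathfrak{p}$ would be a finite, hence separable, extension field of $k$, on which the only $k$-derivation is $0$; so $d_2(k[x,y]) \subseteq \mathfrak{p}$. Then $y^m = d_2(x) \in \mathfrak{p}$ and primality give $y \in \mathfrak{p}$, and combining $d_2(y) = 1 - x^\alpha y \in \mathfrak{p}$ with $y \in \mathfrak{p}$ forces $1 \in \mathfrak{p}$, a contradiction.

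So $\mathfrak{p}$ has height one; as $k[x,y]$ is a UFD, $\mathfrak{p} = (f)$ for some irreducible $f \notin k$, and $d_2(f) = hf$ for some $h \in k[x,y]$ (a Darboux relation). Writing $f = \sum_{i=0}^{l} a_i(x)\, y^i$ with $a_l \neq 0$: the case $l = 0$ is immediate because $d_2(f) = y^m a_0'(x) = h\, a_0(x)$ together with $\gcd(y^m, a_0) = 1$ forces $a_0 \mid a_0'$, whence by degree $a_0 \in k$ and $f \in k$, contradicting our choice. For $l \geq 1$, comparing the top $y$-coefficients in $d_2(f) = hf$ gives $a_l \mid a_l'$, so $a_l \in k$, and I normalize $a_l = 1$.

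To finish, I would mimic the recursive coefficient analysis of the proofs of \Cref{l=mj}, \Cref{ML}, and \Cref{general_prop} applied to $d_2(f) = hf$. Matching the coefficient of $y^i$ in $d_2(f) = hf$ successively produces a recursion of the same shape as \Cref{scndeqn}, namely $a'_{i-m}(x) = (i+1)\, a_{i+1}(x) - i x^\alpha a_i(x) - (\text{contribution from } h)$, and a top-down $x$-degree bookkeeping analogous to \Cref{l=mj} pins down the top-degree behavior of the $a_i$'s. The resulting expressions for $f_m$ and $f_{m-1}$ (cf.\ \Cref{ML}) should then contradict the three-case analysis in \Cref{general_prop}. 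The main obstacle is precisely this last step: the cofactor $h$ introduces additional unknowns that propagate through the recursion, so the bookkeeping is substantially more delicate than in \Cref{general_prop}. A cleaner route, which I would attempt first, is to construct from the pair $(f, h)$ an auxiliary element $r \in k[x,y] \setminus k$ with $d_2(r) \in k + k\,x$; then \Cref{general_prop} applies verbatim to force $r \in k$, yielding the required contradiction.
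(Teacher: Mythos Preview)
The paper's proof is a one-line citation: ``This follows from \cite[Theorem~2.6]{ap23}'' (with a remark that \cite{k14} also suffices). So there is nothing to compare at the level of argument; the paper simply defers to the literature, whereas you attempt a direct proof.

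Your reductions are sound and standard: passing from a $d_2$-stable ideal to a minimal prime (valid in characteristic zero), ruling out the maximal case via Zariski's lemma and the vanishing of $k$-derivations on separable extensions, and reducing the height-one case to a Darboux relation $d_2(f)=hf$ with $f$ irreducible. The $l=0$ case and the normalization $a_l=1$ are handled correctly.

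The genuine gap is the final step, which is the entire content of the lemma. You do not actually rule out nontrivial Darboux polynomials; you only outline two strategies. The first---mimicking the recursion of \Cref{l=mj}--\Cref{general_prop}---you yourself flag as ``substantially more delicate'' because the unknown cofactor $h$ enters every coefficient equation; nothing is carried out. The second---producing an auxiliary $r\in k[x,y]\setminus k$ with $d_2(r)\in k+kx$ from the pair $(f,h)$---is a wish rather than a method: you give no construction, and there is no evident one (the natural candidates such as $\log f$ or rational powers of $f$ leave $k[x,y]$, and \Cref{general_prop} gives information about preimages of $ax+b$, not about Darboux data). As written, the proposal establishes only the easy reductions and leaves the substantive assertion unproved. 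If you want a self-contained route, you would need to carry out the full Darboux analysis; otherwise, citing \cite{ap23} or \cite{k14} as the paper does is the honest option.
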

\begin{proof}
	This follows from \cite[Theorem 2.6]{ap23}. 
\end{proof}

\begin{rem}
    Note that the simplicity of $d_2$ in \Cref{COR} can also be derived from \cite[Theorem 4.1 and Theorem 6.1]{k14}.
\end{rem}

The proof of the following \Cref{ML1} and \Cref{lstthm} is along the same lines as \cite[Lemma 2]{j81} and \cite[Theorem 3]{j81} respectively. Nevertheless, we write the proofs for the sake of completion. 

\begin{lem}\label{ML1}
		Let $a, b\in k$, and  $r \in R_n$ be such that $d_n(r)=ax_n+b$. Then $r\in k$ and $a=b=0$.
	\end{lem}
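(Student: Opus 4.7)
The plan is to prove \Cref{ML1} by induction on $n$. The base case $n=2$ is immediate from \Cref{general_prop}: setting $x_1 = y$ and $x_2 = x$ identifies $d_2$ with the derivation of \Cref{general_prop}, and then $d_2(r) = ax_2 + b$ reads as $d(r) = ax + b$, so the proposition gives $r \in k$ and $a = b = 0$.

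For the inductive step, assume the lemma holds for $n-1 \geq 2$ and suppose $d_n(r) = ax_n + b$. I would use the decomposition $d_n = d_{n-1} + x_{n-1}\partial_{x_n}$, where $d_{n-1}$ is viewed as acting trivially on $x_n$. Writing $r = \sum_{i=0}^N g_i\,x_n^i$ with $g_i \in k[x_1,\dots,x_{n-1}]$ and $g_N \ne 0$, I would compute
\begin{equation*}
  d_n(r) = \sum_{i \geq 0} \bigl(d_{n-1}(g_i) + (i+1)\,x_{n-1}\,g_{i+1}\bigr)\,x_n^i,
\end{equation*}
with $g_{N+1} = 0$. Comparing coefficients of $x_n^i$ with $ax_n + b$ produces the relations
\begin{equation*}
  d_{n-1}(g_0) + x_{n-1} g_1 = b,\qquad d_{n-1}(g_1) + 2 x_{n-1} g_2 = a,
\end{equation*}
and $d_{n-1}(g_i) + (i+1)x_{n-1} g_{i+1} = 0$ for $i \geq 2$.

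Next I would rule out $N \geq 1$ to force $r \in k[x_1,\dots,x_{n-1}]$. Each of the three relations above has the shape $d_{n-1}(g_j) = c\, x_{n-1} + e$ with $c, e \in k$ (because the already-processed $g_i$'s will have been shown to lie in $k$), so the inductive hypothesis applies to every step. Concretely: if $N \geq 3$, the top coefficient gives $d_{n-1}(g_N) = 0$, hence $g_N \in k$ by the IH, and the next coefficient gives $d_{n-1}(g_{N-1}) = -N g_N\, x_{n-1}$, forcing $g_N = 0$, a contradiction. The edge cases $N = 2$ and $N = 1$ are handled with essentially the same argument but using the coefficient-of-$x_n$ equation to absorb $a$, again ending in a contradiction. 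Hence $N = 0$, i.e.\ $r = g_0 \in k[x_1,\dots,x_{n-1}]$; then $d_n(r) = d_{n-1}(g_0) = ax_n + b$, which lies in $k[x_1,\dots,x_{n-1}]$, so $a = 0$ and $d_{n-1}(g_0) = b$. A final application of the inductive hypothesis yields $r \in k$ and $b = 0$.

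The main obstacle is not conceptual but organizational: one has to be careful that each equation derived from the comparison of coefficients genuinely has the form $d_{n-1}(\,\cdot\,) = c\, x_{n-1} + e$ with constants in $k$, which requires processing the $g_i$'s in the correct (downward) order and paying attention to the small-$N$ edge cases where the $i=1$ equation carries $a$. Once this bookkeeping is set up, each step is an immediate invocation of the inductive hypothesis.
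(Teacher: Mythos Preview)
Your proposal is correct and follows essentially the same inductive argument as the paper: expand $r$ in powers of $x_n$, use $d_n = d_{n-1} + x_{n-1}\partial_{x_n}$, and apply the inductive hypothesis to the top two coefficients to force the leading coefficient to vanish when $N\geq 1$. The paper merely streamlines your case split by noting that for any $t\geq 1$ the top two coefficient equations read $d_{n-1}(f_t)\in k$ and $t\,x_{n-1}f_t + d_{n-1}(f_{t-1})\in k$, so no separate treatment of $N=1,2$ is needed.
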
 
	\begin{proof}
	     For $n=2$, the lemma holds by \Cref{general_prop}. Suppose that $n>2$, and that the result holds for $n-1$. If $r=0$, the lemma holds. Now let $r\in R_n\setminus \{0\}$. Then we write $r$ as $ r=\sum_{i=0}^{t}f_ix_n^i$ where $f_i\in R_{n-1}$ for all $i$, $0\leq i\leq t$ and $f_{t}\neq 0$. 
         
         Suppose that $t\geq 1$. Then
	\begin{equation}\label{onlyeqn}
d_n(r)=d_n(\sum_{i=0}^{t}f_ix_n^i) 
		=x_n^t	d_n(f_t)+x_n^{t-1}(tx_{n-1}f_t+d_n(f_{t-1}))+ T,
	\end{equation} where $T \in R_{n-1}[x_n]$ with $\deg_{x_n}(T) <t-1$.

Since $d_n(r)=ax_n+b$, \Cref{onlyeqn} implies that $d_{n-1}(f_t)=d_n(f_t) \in k$ and $tx_{n-1}f_t+d_n(f_{t-1}) \in k$. Since $f_t \in R_{n-1}$, by induction hypothesis, $f_t \in k$. Then $d_{n-1}(f_{t-1})=d_{n}(f_{t-1})$ is of the form $a_1x_{n-1}+b_1$ where $a_1,b_1 \in k$. Thus by induction hypothesis, $f_{t-1}\in k$. Thus $d_n(f_{t-1})=0$, which implies that $tx_{n-1}f_t \in k$, which is a contradiction since $f_t\neq0$ and $t\geq 1$.
         
Therefore $t=0$, that is, $r\in R_{n-1}$. Then $d_{n}(r)=d_{n-1}(r)\in R_{n-1}$. Since $d_{n-1}(r)=ax_n+b$, we conclude that $a=0$. Rewrite $d_{n-1}(r)=0\cdot x_{n-1}+b$. Then by the induction hypothesis it follows that $r\in k$ and $b=0$.
\end{proof}

\begin{thm}\label{lstthm}
		For $n\geq 2$, the $k$-derivation $d_n$ is a simple $k$-derivation of $R_n$ and $d_n(R_n)$ does not contain any unit of $R_n$.
 \end{thm}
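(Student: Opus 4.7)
The plan is to prove both statements by induction on $n$, with \Cref{ML1} as the main lever. For the base case $n=2$, simplicity of $d_2$ is given by \Cref{COR}, and the no-unit assertion follows because any unit of $R_2$ lies in $k\setminus\{0\}$: if such a $u$ equals $d_2(r)$, writing $u=0\cdot x_2+u$ and applying \Cref{general_prop} forces $u=0$, a contradiction. In fact, the no-unit assertion at any $n\geq 2$ is immediate from \Cref{ML1} in the same way: writing a unit $u\in k\setminus\{0\}$ as $0\cdot x_n+u$ gives $u=0$.

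For simplicity at $n\geq 3$, I assume inductively that $d_{n-1}$ is simple on $R_{n-1}$, and let $I$ be a nonzero $d_n$-invariant ideal of $R_n$; the goal is to show $I=R_n$. Since $d_n$ restricted to $R_{n-1}\subset R_n$ coincides with $d_{n-1}$, it is natural to view $R_n=R_{n-1}[x_n]$ and sort elements by their $x_n$-degree. Let $t:=\min\{\deg_{x_n}(r):r\in I\setminus\{0\}\}$, and let $J\subseteq R_{n-1}$ be the set consisting of $0$ together with all leading $x_n^t$-coefficients of those elements of $I$ having $x_n$-degree exactly $t$. A direct check shows $J$ is an ideal of $R_{n-1}$, and for any $r=f\,x_n^t+(\text{lower terms in }x_n)$ in $I$, the identity $d_n(r)=d_{n-1}(f)\,x_n^t+(\text{lower terms in }x_n)$ (the extra term $f\cdot t\,x_{n-1}x_n^{t-1}$ coming from $d_n(x_n^t)$ contributes in $x_n^{t-1}$) shows that $J$ is $d_{n-1}$-invariant. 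Since $J\neq 0$, the induction hypothesis yields $J=R_{n-1}$, and so there exists $r_0\in I$ of the form $r_0=x_n^t+h$ with $\deg_{x_n}(h)<t$.

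If $t=0$ then $r_0=1\in I$ and $I=R_n$. Otherwise $t\geq 1$; the $x_n^t$-coefficient of $d_n(r_0)$ is $d_{n-1}(1)=0$, so $d_n(r_0)\in I$ has $x_n$-degree strictly less than $t$, and by the minimality of $t$ we must have $d_n(r_0)=0$. Extracting the $x_n^{t-1}$-coefficient of the equation $d_n(r_0)=0$ yields $d_{n-1}(h_{t-1})=-t\,x_{n-1}$, where $h_{t-1}\in R_{n-1}$ denotes the $x_n^{t-1}$-coefficient of $h$. But this exhibits an element of $d_{n-1}(R_{n-1})$ of the form $a\,x_{n-1}+b$ with $a=-t\neq 0$, directly contradicting \Cref{ML1} applied to the smaller derivation $d_{n-1}$ on $R_{n-1}$. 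The contradiction forces $t=0$, completing the induction. The one conceptual step that takes a moment to spot is that the sub-leading $x_n$-coefficient of $d_n(r_0)=0$ is precisely the forbidden linear output ruled out by \Cref{ML1} in the smaller ring; once this observation is in place, the rest of the bookkeeping is routine.
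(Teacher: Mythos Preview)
Your argument is correct and follows essentially the same route as the paper: induction on $n$, the leading-coefficient ideal $J\subseteq R_{n-1}$, a monic minimal-degree element $r_0$, and the observation that $d_n(r_0)=0$. The only difference is cosmetic: from $d_n(r_0)=0$ the paper invokes \Cref{ML1} directly (with $a=b=0$) at level $n$ to conclude $r_0\in k$, whereas you re-extract the $x_n^{t-1}$-coefficient and appeal to \Cref{ML1} at level $n-1$; your detour works but is exactly the inductive step already absorbed into \Cref{ML1}.
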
 
\begin{proof} 
    By \Cref{ML1}, it follows that $d_n(R_n)$ does not contain any unit of $R_n$.
    
    Next by induction on $n$, we prove that $d_n$ is a simple $k$-derivation of $R_n$. 
    By \Cref{COR}, $d_2$ is a simple $k$-derivation of $R_2$. Suppose that $n>2$ and that $d_{n-1}$ is a simple $k$-derivation of $R_{n-1}$. 
    View $R_n=R_{n-1}[x_n]$. Let $I$ be a non-zero ideal of $R_n$ such that $d_n(I)\subseteq I$. Let $t= \min\{\deg_{x_n}r| r\in I\; \text{and}\; r\neq 0\}.$ 
    Let $J\subseteq R_{n-1}$ be the set consisting of $0$ and the leading coefficients of non-zero elements of $I$ of degree $t$ in $x_n$. 
    Then $J$ is a non-zero ideal of $R_{n-1}$. For any  non-zero $f_t\in J$, there exists $r\in I$ such that $\deg_{x_n} r =t$ and $r=\sum_{i=0}^{t}f_ix_n^i$ where $f_i\in R_{n-1}$ for $i$, $0\leq i\leq t$. 
    Then $d_n(r)=x_n^td_n(f_t)+x_n^{t-1}(tx_{n-1}f_t+d_n(f_{t-1}))+T\in I$ for some $T \in R_{n-1}[x_n]$ with $\deg_{x_n}(T) <t-1$. 
    Thus $d_{n-1}(f_t)=d_n(f_t)\in J$, and therefore $d_{n-1}(J)\subseteq J$. Since $I\neq 0$, we have $J\neq 0$. 
    Then by induction hypothesis $J=R_{n-1}$. Hence $I$ has an element $\tilde{r}$ whose degree in $x_n$ is minimal among degrees of non-zero elements of $I$, and whose leading coefficient is $1$. 
    Let $\tilde{r}=\sum_{i=0}^{t}g_ix_n^i$ where $g_i\in R_{n-1}$ for $i$, $0\leq i\leq t$ and $g_t=1$. Since $d_n(g_t)=0$, it follows that $\deg_{x_n} d_n(\tilde{r})< \deg_{x_n}\tilde{r}$. 
    Since  $d_n(I)\subseteq I$ and $\tilde{r}\in I$ is a non-zero element of degree $t$, it follows that $d_n(\tilde{r})=0$. Then by \Cref{ML1}, we have $\tilde{r}\in k$. Hence $I=R_n$ and we conclude that $d_n$ is simple. 
    \end{proof}

\section{Isotropy group of $d_n$}\label{iso-section}
Let $d$ be a $k$-derivation of $R_n$. Recall that the isotropy group of the $k$-derivation $d$ is given by
$$\text{Aut}{(R_n)}_{d}=\{\rho \in \text{Aut}(R_n)| d \rho =\rho d \},$$ 
where $\text{Aut}(R_n)$ is the $k$-automorphism group of $R_n$. 
Note that any $k$-automorphism $\rho$ of $R_n$ can be represented by $(g_1,\dots, g_n)$ where $g_i \in R_n$ and $\rho(x_i)=g_i$ for all $i$, $1\leq i\leq n$. 

In \cite[Theorem 1]{MP17}, assuming that $k$ is algebraically closed, it has been proved that the isotropy group $\text{Aut}(R_2)_d=\{id\}$ for any simple $k$-derivation $d$ on $R_2$. However, this also implies that the same result holds over any characteristic zero field. For $n\geq 3$, D. Yan proved that 
$\text{Aut}(R_n)_{d}=\{(x_1,x_2,\dots,x_{n-1},x_n+c)|c\in k\}$ for the simple $k$-derivation $d=(1-x_1x_2)\partial_{x_1}+x^3_1\partial_{x_2}+x_2\partial_{x_3}+\cdots +x_{n-1}\partial_{x_n}$ on $R_n$ (\cite[Theorem 2.4]{Y22}). Furthermore, she conjectured the following:
\begin{conj}\label{conj}
    If $d$ is a simple $k$-derivation of $R_n$ then $\text{Aut}(R_n)_{d}$ is conjugate to a subgroup of translations.
\end{conj}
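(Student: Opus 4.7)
The final statement is Yan's \Cref{conj}, which is open in full generality; the present paper only verifies it for the particular derivation $d_n$ studied here, using quite specific computations, and the published literature does not yet provide a proof for an arbitrary simple $k$-derivation. So my proposal is necessarily speculative, and outlines an attack I would try rather than a complete argument.

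The plan is to work infinitesimally first and pass to the group afterwards. Given a simple $d$, let $Z(d) := \{D \in \mathrm{Der}_k(R_n) : [D,d]=0\}$ be the centralizer of $d$ in the Lie algebra of $k$-derivations. Any one-parameter subgroup $\{\exp(tD) : t \in k\}$ of $\mathrm{Aut}(R_n)_d$ with $D$ locally nilpotent produces an element of $Z(d)$, and conjugating $\mathrm{Aut}(R_n)_d$ into translations amounts, on the infinitesimal side, to conjugating the relevant elements of $Z(d)$ simultaneously into a set of coordinate partials $\partial_{y_{i_1}},\ldots,\partial_{y_{i_r}}$ for some polynomial coordinate system $(y_1,\ldots,y_n)$ on $R_n$. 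So the core step I would attempt is: show that every locally nilpotent $D \in Z(d) \setminus \{0\}$ has a slice, i.e.\ an element $s \in R_n$ with $D(s)=1$, and that the slices can be chosen compatibly to form part of a coordinate system.

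Independently, I would try to handle the non-unipotent part. If $\rho \in \mathrm{Aut}(R_n)_d$ has finite order or is semisimple, simplicity of $d$ should force $\rho = \mathrm{id}$: the fixed ring $R_n^\rho$ is $d$-stable, and if it were a proper non-zero subring one could try to produce from it a proper $d$-stable ideal, contradicting simplicity. Combined with the expectation that the connected component is unipotent and comes from $Z(d) \cap \mathrm{LND}(R_n)$, this would reduce everything to classifying commuting locally nilpotent derivations inside $Z(d)$. The key rigidity input I would try to use is the linear-algebraic fact that for $\rho \in \mathrm{Aut}(R_n)_d$ the ``linear part'' of $\rho - \mathrm{id}$ has entries in $\ker(d)$, so if $\ker(d)$ is small (which is morally true for simple derivations: many of the examples in the literature have $\ker(d) = k$) one can bootstrap from the linearization.

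The main obstacle, and the reason the conjecture is open, is the lack of general structural results about simple derivations. For an arbitrary simple $d$ one does not know $\ker(d)$, does not know whether $d(R_n)$ avoids units, does not have a normal form, and does not know whether $Z(d) \cap \mathrm{LND}(R_n)$ even generates the connected isotropy. Without at least one of these inputs, the passage from ``$\rho$ commutes with $d$'' to ``$\rho$ is a translation after a polynomial change of coordinates'' has no foothold, which is why in the present paper, as in \cite{Y22}, the verification proceeds by direct computation exploiting the explicit triangular shape of $d_n$. A realistic intermediate goal, rather than the full conjecture, would be to prove it under the hypothesis that $\ker(d)=k$ and that $Z(d)$ is finite-dimensional; I would first try that reduced statement before attempting the unrestricted one.
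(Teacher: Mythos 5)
The statement you were asked about is Conjecture~\ref{conj}, and you are right that the paper contains no proof of it: it is Yan's open conjecture, quoted from \cite{Y22}, and the paper's only contribution toward it is the verification for the specific derivation $d_n$ in Theorem~\ref{iso_group}. That verification is entirely computational and exploits the triangular shape of $d_n$: Lemma~\ref{x_1x_2_fixed} shows $\rho(x_1),\rho(x_2)\in k[x_1,x_2]$ by degree comparison in each $x_i$, then invokes the Mendes--Pan theorem (trivial isotropy for simple derivations in two variables) to get $\rho(x_1)=x_1$, $\rho(x_2)=x_2$; the remaining variables are handled by an induction that repeatedly applies simplicity via Lemma~\ref{simple_prop} to force $\rho(x_j)=x_j$ for $j<n$ and $\rho(x_n)=x_n+c$. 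So your assessment that no general proof exists and that the paper proceeds by direct computation is accurate; there is nothing to compare your sketch against except this special case.

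On the sketch itself, two concrete points. First, the hypothesis $\ker(d)=k$ in your proposed ``reduced statement'' is free: for any simple $d$ and any $c\in\ker(d)\setminus k$, the principal ideal $(c-\lambda)$ is a proper nonzero $d$-stable ideal (since $d((c-\lambda)f)=(c-\lambda)d(f)$), contradicting simplicity; this is exactly the content of Lemma~\ref{simple_prop}. Second, the two load-bearing steps of your program do not currently have foundations: $\mathrm{Aut}(R_n)$ is an ind-group, not an algebraic group, so the decomposition of $\mathrm{Aut}(R_n)_d$ into a ``connected unipotent part generated by locally nilpotent elements of $Z(d)$'' and a ``semisimple part'' is not available off the shelf; and the step ``a proper $d$-stable fixed subring $R_n^\rho$ yields a proper $d$-stable ideal'' fails as stated, since a subring carries no canonical ideal and $d$-stability does not transfer. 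Without these, the passage from $Z(d)$ to the group does not close even in your reduced setting, which is consistent with your own conclusion that the argument is a research program rather than a proof.
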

Some examples of simple $k$-derivations providing positive answers to \Cref{conj}  can be found in \cite{HY23}. In this section, we prove that the above conjecture holds for the simple $k$-derivation $d_n$ of $R_n$. The proof follows along the same line as \cite[Theorem 2.4]{Y22}.

We recall a basic fact here. 
\begin{lem}\label{simple_prop}
   Let $d$ be a simple $k$-derivation of $R_n$. If $d(r)=0$ for some $r\in R_n$ then $r\in k$. 
\end{lem}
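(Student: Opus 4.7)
The plan is to exploit the simplicity of $d$ directly via the principal ideal generated by $r$. If $r = 0$ there is nothing to prove, so I would assume $r \neq 0$ and form the ideal $I = r R_n$. The first step is to verify that $I$ is $d$-stable: for any $s \in R_n$, the Leibniz rule gives
\begin{equation*}
d(rs) = d(r)\,s + r\,d(s) = r\,d(s) \in I,
\end{equation*}
where the hypothesis $d(r) = 0$ kills the first summand. Hence $d(I) \subseteq I$.

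Next, since $r \neq 0$, the ideal $I$ is a nonzero $d$-stable ideal of $R_n$. The simplicity of $d$ then forces $I = R_n$, which is to say $r$ is a unit of $R_n$. Because $R_n = k[x_1,\ldots,x_n]$ is a polynomial ring over a field, its group of units is precisely $k^\times$, so $r \in k^\times \subseteq k$, as required.

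There is no real obstacle here; the argument is the standard one-line observation that the kernel of a simple derivation of a polynomial ring consists only of scalars, and it is recorded separately only because it will be invoked repeatedly in the isotropy-group analysis of Section~\ref{iso-section}.
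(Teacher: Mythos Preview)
Your argument is correct and is the standard one: the principal ideal $(r)$ is $d$-stable by the Leibniz rule, hence equals $R_n$ by simplicity, forcing $r$ to be a unit and therefore a scalar. The paper itself offers no proof of this lemma at all---it simply records it as a ``basic fact'' before using it---so your proposal actually supplies the omitted justification rather than diverging from any argument in the text.
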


Our aim is to prove the following:
\begin{thm}\label{iso_group}
  Let $m\geq2$ and $\alpha \geq 1$ be integers. 
  For $n\geq 3$, let $d_n=(1-x_1 x_2^{\alpha})\partial_{x_1}+x_1^m\partial_{x_2}+x_2\partial_{x_3}+\dots+x_{n-1}\partial_{x_n}$  be the $k$-derivation of $R_n=k[x_1, \dots,x_n ]$. Then $\text{Aut}(R_n)_{d_n}=\{(x_1,x_2,\dots , x_{n-1}, x_n+c)| c\in k\}$ .   
\end{thm}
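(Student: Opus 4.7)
The plan is to analyze any $\rho \in \mathrm{Aut}(R_n)_{d_n}$ by setting $g_i := \rho(x_i)$ and exploiting the commutation relations $\rho d_n = d_n \rho$ applied to each generator:
\begin{equation*}
d_n(g_1) = 1 - g_1 g_2^\alpha, \qquad d_n(g_2) = g_1^m, \qquad d_n(g_i) = g_{i-1}\ \text{for } 3 \leq i \leq n.
\end{equation*}
Following the strategy of \cite[Theorem 2.4]{Y22}, the first stage is to show $g_1, g_2 \in R_2 := k[x_1,x_2]$ via a descending $x_i$-degree argument. Every coefficient of $d_n$ lies in $R_{n-1}$ and $\partial_{x_n}$ strictly lowers $\deg_{x_n}$, so $d_n$ cannot raise $\deg_{x_n}$. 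Setting $t_i := \deg_{x_n}(g_i)$, the first relation forces $t_1 + \alpha t_2 \leq t_1$, so $t_2 = 0$; the second then forces $m t_1 \leq 0$, so $t_1 = 0$. Hence $g_1, g_2 \in R_{n-1}$. Since $d_{n-1}$ on $R_{n-1}$ retains the same structural features (coefficients in $R_{n-2}$, and $d_{n-1}(x_{n-1}) = x_{n-2}$), iterating the argument with $\deg_{x_{n-1}}, \ldots, \deg_{x_3}$ drives the conclusion down to $g_1, g_2 \in R_2$.

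In the second stage, I would deduce $g_1 = x_1$ and $g_2 = x_2$. Applying the first stage also to $\rho^{-1} \in \mathrm{Aut}(R_n)_{d_n}$ yields $\rho^{-1}(x_1), \rho^{-1}(x_2) \in R_2$, whence $R_2 = k[g_1, g_2]$ and $\rho|_{R_2} : R_2 \to R_2$ is a $k$-automorphism. Since $d_n|_{R_2} = d_2$, one gets $\rho|_{R_2} \in \mathrm{Aut}(R_2)_{d_2}$. By \Cref{COR}, $d_2$ is simple, so \cite[Theorem 1]{MP17} (whose conclusion extends to any characteristic-zero field, as noted in the discussion preceding \Cref{conj}) gives $\rho|_{R_2} = \mathrm{id}$, i.e., $g_1 = x_1$ and $g_2 = x_2$.

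The third stage is a cascade upward. From $d_n(g_3 - x_3) = g_2 - x_2 = 0$ and simplicity of $d_n$ (\Cref{lstthm}), $g_3 - x_3 \in k$, say $g_3 = x_3 + c_3$. If $n = 3$, this finishes with $c = c_3$. If $n \geq 4$, then $d_n(g_4 - x_4) = c_3$ matches the hypothesis of \Cref{ML1} with $a = 0$, $b = c_3$, forcing $c_3 = 0$ and $g_4 - x_4 \in k$. Iterating this step for $i = 4, 5, \ldots, n$ yields $g_i = x_i$ for $3 \leq i \leq n-1$ and $g_n = x_n + c$ for some $c \in k$. The reverse containment is immediate: any $(x_1, \ldots, x_{n-1}, x_n + c)$ commutes with $d_n$ on generators since no coefficient of $d_n$ involves $x_n$.

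The main obstacle will be the descending degree argument in the first stage, where I need to ensure that the same structural reasoning survives recursively down to $R_2$. This depends crucially on two features of $d_n$: no coefficient involves $x_n$, and the recursion $d_j(x_j) = x_{j-1}$ persists at every level $j \geq 3$. A secondary delicacy is extending \cite[Theorem 1]{MP17} from algebraically closed $k$ to arbitrary characteristic-zero fields (already observed in the paper) in order to apply it in the second stage.
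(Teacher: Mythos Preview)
Your proposal is correct and follows the same overall architecture as the paper's proof (which is explicitly modeled on \cite[Theorem 2.4]{Y22}): first reduce $\rho(x_1),\rho(x_2)$ to $R_2$ by degree descent, then invoke \cite[Theorem 1]{MP17} to pin them to $x_1,x_2$, and finally cascade upward through $x_3,\ldots,x_n$.

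The noteworthy difference lies in the third stage. The paper, for each $i\geq 3$, first proves $\rho(x_i)\in k[x_1,\ldots,x_i]$ by expanding $\rho(x_i)$ as a polynomial in $x_n$, comparing coefficients, and repeatedly applying \Cref{simple_prop} to force the top coefficients into $k$ and then to $0$; only afterward does it identify $\rho(x_i)$ as $x_i+c_i$. Your route is shorter: once $g_2=x_2$ is known, the identity $d_n(g_3-x_3)=g_2-x_2=0$ together with simplicity of $d_n$ (\Cref{simple_prop}) immediately gives $g_3=x_3+c_3$, with no preliminary localization of $\rho(x_3)$ to $k[x_1,x_2,x_3]$; the subsequent steps then feed directly into \Cref{ML1} exactly as you describe. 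This buys a cleaner argument with fewer case distinctions, while the paper's approach is closer in spirit to the original \cite{Y22} computation. Your first-stage degree bound (``$d_n$ cannot raise $\deg_{x_n}$'') is also a slight streamlining of the paper's \Cref{x_1x_2_fixed}, which carries out the analogous comparison via an explicit coefficient expansion and a small case split on $s=\deg_{x_i}\rho(x_2)$.
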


We first prove the following:
\begin{lem}\label{x_1x_2_fixed}
Let $n\geq3$. Let $R_n$ and $d_n$ be as above. Let $\rho \in \text{Aut}(R_n)_{d_n}$. Then $\rho(x_1)=x_1$ and $\rho(x_2)=x_2$. 
\end{lem}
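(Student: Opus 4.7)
The plan is to set $g_i := \rho(x_i)$ for $1 \leq i \leq n$ and extract from $d_n \rho = \rho d_n$ (applied to $x_1$ and $x_2$) the two identities
\begin{equation*}
d_n(g_1) = 1 - g_1 g_2^{\alpha}, \qquad d_n(g_2) = g_1^m.
\end{equation*}
My aim is then to prove that $g_1, g_2 \in R_2$ by a descent in the $x_n$-degree, after which the triviality of $\text{Aut}(R_2)_{d_2}$ (ensured by \Cref{COR} together with the Mesquita--Pan result \cite[Theorem 1]{MP17}) will finish the argument.

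The key observation is that none of the coefficients $1 - x_1 x_2^{\alpha}$, $x_1^m$, $x_2, \dots, x_{n-1}$ of $d_n$ involves $x_n$, so $\deg_{x_n} d_n(r) \leq \deg_{x_n} r$ for every $r \in R_n$. Set $f := \deg_{x_n} g_1$ and $e := \deg_{x_n} g_2$. First I would rule out the possibility $1 - g_1 g_2^{\alpha} = 0$: since $R_n$ is a UFD, this would force $g_1, g_2 \in k^{\times}$, and then $\rho(R_n) \subseteq k[g_3, \dots, g_n]$ would have transcendence degree at most $n-2$, contradicting surjectivity of $\rho$. Thus $\deg_{x_n}(1 - g_1 g_2^{\alpha}) = f + \alpha e$ whenever this quantity is positive. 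Suppose for contradiction that $f + \alpha e > 0$. Comparing with $\deg_{x_n} d_n(g_1) \leq f$ yields $\alpha e \leq 0$, so $e = 0$; feeding this into $\deg_{x_n} d_n(g_2) \leq e = 0$ together with $\deg_{x_n} g_1^m = mf$ forces $f = 0$, contradicting $f + \alpha e > 0$. Hence $f + \alpha e = 0$ and so $g_1, g_2 \in R_{n-1}$.

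I would then iterate the argument: within $R_{n-1}$, the derivation restricts to $d_{n-1}$, whose coefficients $1 - x_1 x_2^{\alpha}$, $x_1^m$, $x_2, \dots, x_{n-2}$ do not involve $x_{n-1}$, so the same $x_{n-1}$-degree comparison forces $g_1, g_2 \in R_{n-2}$; continuing this descent eventually gives $g_1, g_2 \in R_2$. To conclude, applying the entire argument to $\rho^{-1}$ (which likewise lies in $\text{Aut}(R_n)_{d_n}$) yields $\rho^{-1}(x_1), \rho^{-1}(x_2) \in R_2$, so $\rho|_{R_2}$ and $\rho^{-1}|_{R_2}$ are mutually inverse endomorphisms of $R_2$. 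Since $\rho|_{R_2}$ commutes with the simple derivation $d_2 = d_n|_{R_2}$, \cite[Theorem 1]{MP17} gives $\rho|_{R_2} = \mathrm{id}_{R_2}$, whence $\rho(x_1) = x_1$ and $\rho(x_2) = x_2$.

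The main obstacle, I expect, is the $x_n$-degree contradiction: it depends delicately on the fact that $d_n$ cannot raise the $x_n$-degree while the right-hand side of $d_n(g_1) = 1 - g_1 g_2^{\alpha}$ has $x_n$-degree exactly $f + \alpha e$, and on ruling out the degenerate case $g_1 g_2^{\alpha} \in k$ (which is precisely where the UFD property of $R_n$ and the surjectivity of $\rho$ enter). The symmetric constraint coming from $d_n(g_2) = g_1^m$ is then what closes the circle and pins $f$ down to $0$.
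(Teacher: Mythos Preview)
Your proof is correct and, in fact, cleaner than the paper's. The overall architecture is the same --- show $\rho(x_1),\rho(x_2)\in R_2$, repeat for $\rho^{-1}$, then invoke \cite[Theorem~1]{MP17} for the simple derivation $d_2$ --- but the way you reach $R_2$ is different. The paper fixes an arbitrary $i\in\{3,\dots,n\}$ and expands $\rho(x_1),\rho(x_2)$ as polynomials in $x_i$ with coefficients in $k[x_1,\dots,\hat{x_i},\dots,x_n]$; because the coefficient $x_i$ of $\partial_{x_{i+1}}$ in $d_n$ can raise the $x_i$-degree by one, they only get $\deg_{x_i}\rho(x_2)\leq 1$ at first and must dispose of the case $\deg_{x_i}\rho(x_2)=1$ (which forces $\alpha=1$) by a separate leading-coefficient comparison in $x_{i+1}$. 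Your descent avoids this entirely: at each stage you work with the \emph{top} variable $x_j$ of $R_j$, which never appears among the coefficients of $d_j$, so $\deg_{x_j}d_j(r)\leq\deg_{x_j}r$ holds on the nose and the two inequalities $f+\alpha e\leq f$ and $mf\leq e$ force $e=f=0$ with no case split. (Incidentally, your preliminary exclusion of $g_1g_2^{\alpha}=1$ is not actually needed for the degree step: if $f+\alpha e>0$ then $g_1g_2^{\alpha}$ already has positive $x_n$-degree and cannot equal $1$; and if $g_1,g_2\in k^{\times}$ you are already in $R_2$. But the argument you give for it is correct.)
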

\begin{proof}
Let $\rho \in \text{Aut}(R_n)_{d_n}$. Then $\rho d_n=d_n \rho$. 
Let $i\in \{3,\dots, n\}$. Suppose that $\displaystyle \rho(x_1)= \sum_{l=0}^{t} f_{l} x^l_{i}$ and $ \displaystyle \rho(x_2)=\sum_{j=0}^{s} g_{j} x^{j}_{i},$
where $f_t,g_s \neq 0$, $f_l,g_j \in k[x_1,\dots,\hat{x_{i}},\dots, x_n]$ for $0\leq l\leq t$ and $0\leq j \leq s$. 
Note that \begin{equation}\label{4.4.2}
    d_n(f_t)= (1-x_1x^{\alpha}_2) \partial_{x_1} f_t + x^{m}_1 \partial_{x_2} f_t+\cdots +x_{i-2}\partial_{x_{i-1}} f_t+ x_i \partial_{x_{i+1}} f_t+\cdots +x_{n-1}\partial_{x_n} f_t.
\end{equation}
Since $d_n \rho(x_1)=\rho d_n(x_1)$, we have:
\begin{equation}\label{4.4.1}
    d_n(f_t)x^t_i+\cdots + d_n(f_0)+x_{i-1}(tf_tx^{t-1}_i+\dots +f_1) = 1-(f_tx^t_i+ \cdots +f_0) (g_sx^s_i+\cdots +g_0)^{\alpha}.
\end{equation}
Since $\deg_{x_i}(d_n(f_t))\leq 1$, we get that $s\leq 1$.

Suppose $s= 1$. Then it follows from  \Cref{4.4.1} and \Cref{4.4.2} that $\alpha =1$. From  \Cref{4.4.1} we get that  $\partial_{x_{i+1}}f_t=-f_tg_s$, which is a contradiction since $f_tg_s\neq0$ and $\deg_{x_{i+1}} (\partial_{x_{i+1}}f_t) < \deg_{x_{i+1}}f_t$. 
Thus $s=0$. Hence $\rho(x_2)\in k[x_1,\dots,\hat{x_{i}},\dots, x_n]$ for all $i$, $3\leq i \leq n$. Therefore $\rho(x_2)\in k[x_1,x_2]$. 

Let $i\in \{3,\dots, n\}$. Since $\rho d_n(x_2)=d_n \rho(x_2)$, we have 
\begin{equation}\label{4.4.3}
    (f_tx^t_i+\cdots +f_0)^m=(1-x_1 x^{\alpha}_2)\partial_{x_1}\rho(x_2)+x^m_1\partial_{x_2}\rho(x_2).
\end{equation}
Comparing $\deg_{x_i}$ in \Cref{4.4.3}, we get that $t=0$. Thus for all $i$, $3\leq i \leq n$,  $\rho(x_1)\in k[x_1,\dots,\hat{x_{i}},\dots, x_n]$. Hence $\rho(x_1)\in k[x_1,x_2]$. 

Note that $\rho^{-1}\in Aut(R_n)_{d_n}$. Similarly $\rho^{-1}(x_1), \rho^{-1}(x_2)\in k[x_1,x_2].$ Thus $\rho\in Aut(k[x_1,x_2])_{d_2}$. Since $d_2$ is simple by \Cref{lstthm}, 
it follows from \cite[Theorem 1]{MP17} that $\rho(x_1)=x_1$ and $\rho(x_2)=x_2$. \end{proof}

\textbf{Proof of \Cref{iso_group}} For any $c\in k,$ it is easy to see that $(x_1,.., x_{n-1}, x_n+c)\in \text{Aut}(R_n)_{d_n}$.

Let $\rho\in Aut(R_n)_{d_n}$. 
By \Cref{x_1x_2_fixed}, $\rho(x_1)=x_1$ and $\rho(x_2)=x_2$. 
Let $\rho(x_3)=f_{t}x^t_n+\cdots +f_1x_n+f_0$ where $f_t\neq 0$ and $f_j\in k[x_1,\dots, x_{n-1}]$ for $0\leq j\leq t$. 
Since $\rho d_n(x_3)=d_n \rho(x_3),$ we have 
\begin{equation}
    x_2=d_n(f_t)x^t_n+\cdots +d_n(f_1)x_n+d_n(f_0)+x_{n-1}(tf_tx^{t-1}_n+\cdots +f_1).
\end{equation}
We first show that $\rho(x_3)\in k[x_1,x_2,x_3]$. This statement clearly holds for $n=3$.
Suppose $n>3$. Assume $t\geq1$. Then equating the coefficients of $x^j_n$ for $j= t,\dots,1,0$, we have 
\begin{equation}\label{4.3.5}
    d_n(f_t)=0
\end{equation}
\begin{equation}\label{4.3.6}
    d_n(f_{t-1})+tf_tx_{n-1}=0
\end{equation}
\begin{equation*}
\begin{aligned}
{\vdots} 
\end{aligned}
\end{equation*}
\begin{equation}\label{4.3.7}
    d_n(f_1)+2f_2x_{n-1}=0
\end{equation}
\begin{equation}\label{4.3.8}
    d_n(f_0)+f_1x_{n-1}=x_2.
\end{equation}
Since $d_n$ is simple by \Cref{lstthm}, it follows from \Cref{simple_prop} that $f_t\in k^{\ast}$. If $t\geq2$ then $d_n(f_{t-1}+t f_t x_n)=0$ by \Cref{4.3.6}. By \Cref{simple_prop} we get that $f_{t-1}+t f_t x_n\in k$, which is a contradiction. Now let $t=1$. By \Cref{4.3.5} and \Cref{simple_prop}, $f_1\in k^{\ast}.$ Then by \Cref{4.3.8} we have $d_n(f_0+f_1 x_n-x_3)=0$. Again using \Cref{simple_prop}, we get that $f_0+f_1 x_n-x_3\in k$, which is a contradiction. Thus $t=0$. Hence $\rho(x_3)\in k[x_1,x_2,\dots, x_{n-1}]$. Repeating the above procedure it follows that $\rho(x_3)\in k[x_1,x_2,x_3]$. 

Let $\rho(x_3)=g_tx^t_3+\cdots+ g_1x_3+g_0$ where $g_t\neq 0$, $g_j\in k[x_1,x_2]$ for $0\leq j\leq t$. Using $d_n \rho(x_3)=\rho d_n(x_3)$ and arguing as above, we get that $t\leq 1$. 
Suppose that $t=0$. Then $d_n(g_0-x_3)=0$. By \Cref{simple_prop} we get that $g_0-x_3\in k$, which is a contradiction. Thus $t=1$ and we have
\begin{equation}
    d_n(g_1)=0
\end{equation}
\begin{equation}
    d_n(g_0)+g_1x_2=x_2.
\end{equation}
Thus by \Cref{simple_prop}, $g_1\in k^{\ast}$ and $g_0+g_1x_3-x_3\in k$. Therefore $g_1=1$ and $g_0\in k$. Hence $\rho(x_3)=x_3+c_3$ where $c_3:=g_0\in k$. This completes the proof for $n=3$ case. 

Now assume that $n=4$. 
Arguing similarly as above and using \Cref{lstthm}, we get that $\rho(x_4)=f_1x_4+f_0$, where $f_1 \neq 0$ and $f_0,f_1 \in k[x_1,x_2,x_3]$. Since $d_n \rho (x_4) = \rho d_n (x_4)$, it follows that $f_1\in k^{\ast}$ and $d_n(f_0+f_1x_4-x_4)=c_3$ where $c_3\in k$. If $c_3\in k^{\ast}$, then we have a contradiction by \Cref{lstthm}. Hence $c_3=0$. Furthermore, using \Cref{simple_prop}, we have $f_1=1$ and $f_0\in k$. Thence $\rho(x_3)=x_3$ and $\rho(x_4)=x_4+c_4$ where $c_4:=f_0\in k$. This completes the proof for $n=4$.

Now assume that $n>4$. Proceeding similarly, we get that for all $1\leq j\leq n-1$, $\rho(x_j)=x_j$ and $\rho(x_n)=x_n+c$ for some $c\in k$.  $\hfill{\square}$

\section*{Declaration of interests} The authors declare that they have no known competing financial interests or personal relationships that could have appeared to influence the work reported in this paper.

\section*{Acknowledgements} The first named author would like to thank the Department of Science and Technology of India  for the INSPIRE Faculty fellowship grant IFA23- MA 197 and IIT Indore for Young Faculty Seed Grant IITIIYFRSG 2024-25/Phase-V/04R. The second author would like to thank IIT Indore for the Young Faculty Research Grant (No. IITIIYFRSG 2024-25/Phase-VII/05). The third named author is financially supported by the SRF grant from CSIR India, Sr. No. 09/1022(16075)/2022-EMR-I. The authors express their sincere gratitude to the reviewers for the insightful and constructive comments, which have significantly contributed to enhancing the clarity and readability of the manuscript.

\end{document}